\documentclass[12pt]{amsart}
\usepackage{amsthm, amssymb, amsmath,dsfont}
\usepackage{mdwlist}
\usepackage{mathrsfs}
\usepackage{a4wide}
\usepackage[T1]{fontenc}

\newtheorem{theorem}{Theorem}
\newtheorem*{thm}{H\"older's inequality for $\boldsymbol{0<p<1}$}

\theoremstyle{definition}

\theoremstyle{remark}


\newcommand{\Z}{\mathbb{Z}}
\newcommand{\Q}{\mathbb{Q}}
\newcommand{\R}{\mathbb{R}}

\newcommand{\da}{\mathrm{d}}

\newcommand{\p}{\varphi}
\newcommand{\Pa}{\mathbb{P}}
\newcommand{\pp}{\boldsymbol{p}}
\newcommand{\w}{\widetilde}
\newcommand{\Sa}{\mathcal{S}}
\newcommand{\ind}{\mathds{1}}

\renewcommand{\geq}{\geqslant}
\renewcommand{\leq}{\leqslant}

\begin{document}
\title[Characterisation of $L_p$-norms]{Characterisation of $L_p$-norms\\ via H\"older's inequality}

\author[T. Kochanek]{Tomasz Kochanek}
\author[M. Lewicki]{Micha\l\ Lewicki}
\address{Institute of Mathematics, University of Silesia, Bankowa 14, 40-007 Katowice, Poland}
\email{t\_kochanek@wp.pl, m\_lewicki@wp.pl}

\subjclass[2010]{Primary 26D15, 39B05, 46B04}
\date{}
\dedicatory{}
\keywords{H\"older's inequality, Minkowski's inequality, $L_p$-norm}

\begin{abstract}
We characterise $L_p$-norms on the space of integrable step functions, defined on a~probabilistic space, via H\"older's type inequality with an~optimality condition.
\end{abstract}

\maketitle
\section{Introduction}
In a series of papers Matkowski (\cite{matkowski (pams)}, \cite{matkowski (aeq)}, \cite{matkowski (studia)}, \cite{matkowski (indag)}, \cite{jmaa}), jointly with \'Swi\k{a}tkowski (\cite{matkowski_swiatkowski (jmaa)}, \cite{matkowski_swiatkowski}), derived several characterisations of the $L_p$-norm via classical H\"older's and Minkowski's inequalities. In this paper we will deal with a~certain, in a~sense critical, case concerning the H\"older inequality.

Hereinafter $(X,\Sigma,\mu)$ stands for a~measure space. Every $\mu$-integrable function will be treated as an element of $L_1(\mu)$, {\it i.e.}, we interpret equality between such functions in the $\mu$-almost everywhere sense. We denote $\Sa=\Sa(X)$ the vector space of all $\Sigma$-integrable step functions and $\Sa_+=\Sa_+(X)=\{f\in\Sa\colon f\geq 0\}$. Denote $\R_+=[0,\infty)$ and for any bijection $\p\colon\R_+\to\R_+$ with $\p(0)=0$ set $$\Pa_\p(f)=\p^{-1}\Bigl(\int_X\p\circ f\,\da\mu\Bigr)\quad\mbox{for }f\in\Sa.$$

We are motivated by the following result of Matkowski:
\begin{theorem}[{\cite[Theorem 3]{matkowski (studia)}}]\label{matkowski1}
Suppose that there are two sets $A,B\in\Sigma$ such that
\begin{equation}\label{AB}
0<\mu(A)<1<\mu(B)<\infty
\end{equation}
and $\p,\psi\colon\R_+\to\R_+$ are bijections satisfying $\p(0)=\psi(0)=0$ and 
\begin{equation}\label{A}
\int_X fg\,\da\mu\leq\Pa_\p(f)\Pa_\psi(g)\quad\mbox{for }f,g\in\Sa_+.
\end{equation}
Then there exist numbers $p,q>1$ with $p^{-1}+q^{-1}=1$ such that $\p(t)=\p(1)t^p$ and $\psi(t)=\psi(1)t^q$ for $t\in\R_+$.
\end{theorem}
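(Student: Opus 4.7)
The plan is to reduce \eqref{A} to a one-parameter functional inequality by choosing very simple step functions, and then exploit the fact that $1$ lies strictly between $\mu(A)$ and $\mu(B)$ to pin down $\p$ and $\psi$. Plugging in $f=\alpha\ind_E$ and $g=\beta\ind_E$ for $E\in\{A,B\}$ and $m=\mu(E)$, inequality \eqref{A} collapses to
\[
\alpha\beta m\leq\p^{-1}\bigl(\p(\alpha)m\bigr)\,\psi^{-1}\bigl(\psi(\beta)m\bigr)\qquad(\alpha,\beta>0),
\]
valid for $m=\mu(A)<1$ and $m=\mu(B)>1$, and trivially an equality at $m=1$. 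Writing $\Phi=\p^{-1}$, $\Psi=\psi^{-1}$, $a=\p(\alpha)$, $b=\psi(\beta)$, this becomes $\Phi(am)\Psi(bm)\geq m\,\Phi(a)\Psi(b)$.

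Next I would exploit a substitution duality: the map $a\mapsto a/m$, $b\mapsto b/m$ converts the forward inequality at $m$ into the reverse one at $1/m$. Iterating the two base cases, the forward direction holds on the multiplicative semigroup generated by $\mu(A)$ and $\mu(B)$, and the reverse on the semigroup generated by their reciprocals. Since exactly one of the two base values lies below $1$ and the other above, those semigroups accumulate around every $t>0$; combined with continuity and monotonicity of $\p,\psi$ (which I would derive from \eqref{A} by testing against monotone step functions), sandwiching forces the equality $\alpha\beta m=\Phi(am)\Psi(bm)$ for all admissible $\alpha,\beta,m$.

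Given this equality, I separate variables: it reads $\Phi(am)/\Phi(a)\cdot\Psi(bm)/\Psi(b)=m$, and for fixed $m$ the two factors depend only on $a$ and only on $b$ respectively, so each must be a function of $m$ alone, say $F(m)$ and $G(m)$, with $F(m)G(m)=m$. The relation $\Phi(am)=F(m)\Phi(a)$ transforms back to $\p(u(m)\alpha)=m\,\p(\alpha)$; varying $m$ yields multiplicativity $u(mm')=u(m)u(m')$, which together with monotonicity of $\p$ forces $\p(t)=\p(1)t^p$ for some $p\in\R$, and analogously $\psi(t)=\psi(1)t^q$. Substituting back gives $m^{1/p+1/q}=m$ on a nondiscrete set, so $1/p+1/q=1$; bijectivity of $\p,\psi$ on $\R_+$ with $\p(0)=\psi(0)=0$ forces $p,q>0$, whence $p,q>1$.

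The main obstacle I anticipate is the equality-upgrade step: in the incommensurable case $\log\mu(A)/\log\mu(B)\notin\Q$ the two semigroups are merely dense in $\R_+$, while in the commensurable case they are discrete, so one must first extract enough regularity (monotonicity and continuity) of $\p$ and $\psi$ directly from \eqref{A} before the sandwich argument can close. Extracting that regularity from nothing more than bijectivity on $\R_+$, the normalization $\p(0)=\psi(0)=0$, and the inequality itself is, I expect, the most delicate piece of the proof.
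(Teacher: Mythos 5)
First, note that the paper itself gives no proof of this theorem: it is quoted from Matkowski's Studia Math.\ paper, so the comparison below is with the techniques that the present paper (and Matkowski) actually deploy for statements of this kind. Your reduction to $\Phi(am)\Psi(bm)\geq m\,\Phi(a)\Psi(b)$ (with $\Phi=\p^{-1}$, $\Psi=\psi^{-1}$), the reciprocal trick, and the semigroup iteration are all correct. The genuine gap is that the family of test functions you use --- $f$ and $g$ both scalar multiples of the \emph{same} indicator $\ind_E$ --- is provably insufficient, and not only for the regularity reasons you flag at the end. In the commensurable case the derived constraints simply do not determine $\p,\psi$: take $\mu(A)=\tfrac12$, $\mu(B)=4$, let $\theta(x)=\exp\bigl(\e\sin(2\pi\log_2x)\bigr)$ with $\e>0$ small (so that $\theta(2x)=\theta(x)$), and set $\Phi(x)=x^{1/2}\theta(x)$, $\Psi(x)=x^{1/2}/\theta(x)$ for $x>0$, $\Phi(0)=\Psi(0)=0$. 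These are smooth increasing bijections of $\R_+$ which are not power functions, yet $\Phi(am)\Psi(bm)=m\,\Phi(a)\Psi(b)$ holds for all $a,b>0$ and every $m\in 2^{\Z}$, which is exactly the semigroup generated by $\tfrac12$ and $4$ together with its reciprocals. Hence every inequality you derive holds (with equality), the sandwich closes onto a \emph{discrete} set only, and your separation of variables yields multiplicativity of $F$ merely on $2^{\Z}$, which does not force $\p$ to be a power function even granting full smoothness and monotonicity. So the obstacle is not just ``extracting regularity before the sandwich can close''; with these test functions the argument cannot close at all in the commensurable case. (In the incommensurable case your scheme does work, but only after continuity of $\p^{-1},\psi^{-1}$ is established, and ``testing against monotone step functions'' is not an argument for that.)

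What is missing is precisely the information carried by genuinely two-valued step functions, $f=\p^{-1}(t)\ind_{A_1}+\p^{-1}(u)\ind_{A_2}$ and $g=\psi^{-1}(v)\ind_{A_1}+\psi^{-1}(w)\ind_{A_2}$. Plugging these into \eqref{A} produces Jensen-type inequalities for the two-variable map $(s,t)\mapsto\p^{-1}(s)\psi^{-1}(t)$ --- concavity-type constraints from sets of small measure and reversed ones from the set $B$ of measure greater than $1$ --- and it is this concavity that simultaneously delivers the continuity you need and the rigidity that excludes the multiplicatively periodic perturbations above. This is the mechanism used throughout the present paper (Theorem \ref{matkowski2}, the Mulholland inequality and Theorem \ref{ms} in the proof of Theorem \ref{M1}, the convexity argument in Theorem \ref{M3}) and in Matkowski's original proof of the quoted result. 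Your single-indicator reduction is a sensible first step, and correctly isolates why $\mu(A)<1<\mu(B)$ forces $p^{-1}+q^{-1}=1$ once the power form is known, but it must be supplemented by these two-point constraints before the power form can be deduced.
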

\noindent
It was also shown in \cite{matkowski (studia)} that assumption \eqref{AB} in the above theorem is essential. Namely, we have what follows:
\begin{theorem}[{\cite[Theorem 5]{matkowski (studia)}}]\label{matkowski2}
Suppose that $(X,\Sigma,\mu)$ is a~probabilistic space {\rm (}i.e. $\mu(X)=1${\rm )} such that for at least one set $A\in\Sigma$ we have $0<\mu(A)<1$. Then, bijections $\p,\psi\colon\R_+\to\R_+$ with $\p(0)=\psi(0)=0$ satisfy inequality \eqref{A} if and only if the map $F\colon\R_+^2\to\R_+$ defined by $F(s,t)=\p^{-1}(s)\psi^{-1}(t)$ is concave.
\end{theorem}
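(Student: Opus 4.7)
The strategy is to translate inequality~\eqref{A} into a two-variable Jensen-type inequality for $F$ on the probability space, extract a one-parameter concavity via the hypothesised set $A_0$, and then upgrade it to full concavity by exploiting the product-of-bijections structure of~$F$. The reduction is achieved by the substitution $u=\p\circ f$, $v=\psi\circ g$. Because $\p,\psi$ are bijections of $\R_+$ with $\p(0)=\psi(0)=0$, the correspondence $(f,g)\leftrightarrow(u,v)$ is a bijection of $\Sa_+\times\Sa_+$ onto itself, and inequality~\eqref{A} is equivalent to
\[
\int_X F(u,v)\,\da\mu\ \leq\ F\!\left(\int_X u\,\da\mu,\,\int_X v\,\da\mu\right)\qquad(u,v\in\Sa_+).\quad(\star)
\]

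\emph{Sufficiency.} If $F$ is concave, write $u=\sum_{i=1}^N s_i\ind_{A_i}$ and $v=\sum_{i=1}^N t_i\ind_{A_i}$ over a common measurable refinement of their level sets, with $\mu(A_i)=:\alpha_i$ and $\sum_i\alpha_i=1$. Then $(\star)$ reduces to the finite Jensen inequality $\sum_i\alpha_i F(s_i,t_i)\leq F\!\left(\sum_i\alpha_i s_i,\sum_i\alpha_i t_i\right)$, which follows immediately from the concavity of $F$ on the probability space.

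\emph{Necessity.} Assume $(\star)$. Fix $A_0\in\Sigma$ with $\alpha_0:=\mu(A_0)\in(0,1)$ and apply $(\star)$ to the two-valued step functions $u=s_1\ind_{A_0}+s_2\ind_{A_0^c}$, $v=t_1\ind_{A_0}+t_2\ind_{A_0^c}$. A direct computation yields the $\alpha_0$-Jensen-concavity of $F$ on $\R_+^2$:
\[
\alpha_0 F(s_1,t_1)+(1-\alpha_0)F(s_2,t_2)\ \leq\ F\!\bigl(\alpha_0 s_1+(1-\alpha_0)s_2,\ \alpha_0 t_1+(1-\alpha_0)t_2\bigr).\quad(\dagger)
\]
Setting $s_1=s_2$ (resp.~$t_1=t_2$) in $(\dagger)$ isolates the $\alpha_0$-Jensen-concavity of $\psi^{-1}$ (resp.~$\p^{-1}$) on~$\R_+$.

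\emph{The main obstacle.} One must upgrade $\alpha_0$-Jensen-concavity to full concavity. The essential rigidity is supplied by the bijectivity of $\p^{-1},\psi^{-1}\colon\R_+\to\R_+$ together with the boundary condition $\p^{-1}(0)=\psi^{-1}(0)=0$: these force enough regularity of the two bijections (monotonicity---since a local oscillation would, via $(\dagger)$, produce two distinct preimages of a common value and contradict injectivity---and hence Borel measurability) for a classical regularity theorem in the Bernstein--Doetsch / Kuczma tradition to apply, promoting a measurable $\alpha_0$-Jensen-concave function on a real interval to a continuous, concave one. Thus $\p^{-1}$ and $\psi^{-1}$ are continuous, $F$ is continuous, and iterating $(\dagger)$ through the multiplicative semigroup generated by $\alpha_0$ and $1-\alpha_0$ produces a dense set of admissible convex weights; continuity of $F$ then forces full concavity on $\R_+^2$. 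This regularity passage is the nontrivial heart of the argument.
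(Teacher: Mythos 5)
The paper itself offers no proof of this statement: it is quoted verbatim from Matkowski's Studia Mathematica paper, and the only fragment of its argument that reappears here is the two--valued step-function computation in the proof of Theorem~\ref{M3}, carried out there for the convex analogue under the stronger hypothesis $\mu(\Sigma)=[0,1]$, where every weight $\lambda\in(0,1)$ is realised by some set and no upgrading is needed. Judged on its own, your reduction to $(\star)$ via $u=\p\circ f$, $v=\psi\circ g$ is correct, the sufficiency direction is indeed a finite Jensen inequality, the derivation of $(\dagger)$ is the standard computation, and you have correctly identified that under the actual hypothesis (a \emph{single} set $A$ with $0<\mu(A)<1$) the entire difficulty is the passage from $\alpha_0$-Jensen-concavity to genuine concavity.

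The mechanism you propose for that passage, however, does not work as written. The assertion that a failure of monotonicity of $\p^{-1}$ ``would, via $(\dagger)$, produce two distinct preimages of a common value and contradict injectivity'' is a non sequitur: injectivity is a hypothesis, a non-monotone injection by definition has no two preimages of a common value, and nothing in $(\dagger)$ manufactures them --- you would have to \emph{derive} such a coincidence of values, and no derivation is given (there exist wildly non-monotone, non-measurable bijections of $\R_+$, and ruling them out is precisely the issue). Moreover, even granting measurability, classical Bernstein--Doetsch is stated for midpoint convexity, so for a single, possibly irrational, weight $\alpha_0$ you additionally need Kuhn's theorem that $t$-convexity for one $t\in(0,1)$ already implies Jensen convexity. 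The correct lever, which makes the whole detour through injectivity and measurability unnecessary, is that $F$ takes values in $\R_+$: hence $-F$ is $\alpha_0$-Jensen-convex and bounded above by $0$ on all of $\R_+^2$, so Kuhn's theorem combined with Bernstein--Doetsch yields that $-F$ is convex and continuous on the open quadrant; the same one-variable argument applied to $\p^{-1}$ and $\psi^{-1}$ (concave, non-negative, vanishing at $0$, surjective, hence increasing homeomorphisms of $\R_+$) gives continuity of $F$ up to the boundary, after which your densification of the admissible weights, or a direct limiting argument, finishes the proof. So the architecture of your proposal is sound, but the step you yourself single out as ``the nontrivial heart'' is argued incorrectly and must be replaced by the boundedness-below argument.
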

As we see, the condition $\mu(X)=1$ is critical and in this case H\"older's inequality does not determine a~concrete form of $\p$ and $\psi$. The aim of this paper is to show that this situation may be fixed by adopting additionally the following consistency condition:
$$\begin{array}{l}
\mbox{For every non-zero function }f\in\Sa_+\mbox{ there exists a~function }\chi\colon\R_+\to\R_+\\
\mbox{satisfying }\chi(0)=0\mbox{ and }\chi\circ f\not=0\mbox{ and such that inequality \eqref{A} becomes}\\
\mbox{equality for }g=\chi\circ f.\mbox{ Conversely, for every non-zero }g\in\Sa_+\mbox{ there is a}\\
\mbox{function }\tau\colon\R_+\to\R_+\mbox{ satisfying }\tau(0)=0\mbox{ and }\tau\circ g\not=0\mbox{ and such that}\\
\mbox{inequality \eqref{A} becomes equality for }f=\tau\circ g.
\end{array}
\leqno(*)
$$
It simply says that inequality \eqref{A} is assumed to be optimal for any given map from $\Sa$. Recall that the equality case in H\"older's inequality for exponents $p$ and $q$ occurs exactly when $f^p$ and $g^q$ are proportional, hence the above condition holds true with $\chi(t)=t^{p/q}$ and $\tau(t)=t^{q/p}$. That is why, we believe, it is a natural requirement.

Our idea is based on introducing a~new measure space $(Y,T,\nu)$, with total mass greater than $1$, and then applying a~result (Theorem \ref{M1}) concerning generalised Minkowski's inequality for the product space $(X\times Y,\Sigma\otimes T,\mu\otimes\nu)$. Let us recall that the mentioned inequality asserts that if $\mu$ and $\nu$ are $\sigma$-finite measures and $F\colon X\times Y\to\R_+$ is $(\Sigma\otimes T)$-measurable, then for every $1\leq p<\infty$ we have
\begin{equation}\label{GMI}
\Biggl\{\int_X\Biggl(\int_YF(x,y)\,\nu(\da y)\Biggr)^p\mu(\da x)\Biggr\}^{1/p}\leq\int_Y\Biggl(\int_XF(x,y)^p\,\mu(\da x)\Biggr)^{1/p}\nu(\da y).
\end{equation}
\section{Results}
\begin{theorem}\label{M1}
Let $(X,\Sigma,\mu)$ and $(Y,T,\nu)$ be measure spaces such that $[0,1]\subset\mu(\Sigma)$ and $[0,\alpha]\subset\nu(T)$ for some $\alpha>1$. Suppose $\p,\psi\colon\R_+\to\R_+$ satisfy $\p(0)=\psi(0)=0$ and $\p(\R_+)=[0,\beta|$ {\rm (}may be either right-closed or right-open{\rm )} for some $\beta\in (0,\infty]$. Then, the inequality
\begin{equation}\label{pp}
\psi\Biggl\{\int_X\p\Biggl(\int_YF(x,y)\,\nu(\da y)\Biggr)\mu(\da x)\Biggr\}\leq\int_Y\psi\Biggl(\int_X\p\circ F(x,y)\,\mu(\da x)\Biggr)\nu(\da y)
\end{equation}
holds true for every $F\in\Sa_+(X\times Y)$ if, and only if, either:
\begin{itemize*}
\item[{\rm (i)}] $\psi|_{\p(\R_+)}=0$, or
\item[{\rm (ii)}] $\p(1)\not=0\not=\psi(1)$ and there exists $p\geq 1$ such that $\p(t)=\p(1)t^p$ and $\psi(t)=\psi(1)t^{1/p}$ for $t\in\R_+$.
\end{itemize*} 
\end{theorem}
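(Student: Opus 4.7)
\medskip
\noindent\textbf{Plan.} The sufficiency is routine. Under (ii) with $\p(t)=\p(1)t^p$ and $\psi(t)=\psi(1)t^{1/p}$, factoring out $\psi(1)\p(1)^{1/p}$ from both sides of \eqref{pp} reduces it to the generalised Minkowski inequality \eqref{GMI} for the exponent $p\geq 1$. Under (i), one observes that $\psi\circ\p\equiv 0$ makes the relevant integrals disappear.

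For necessity, my plan is to probe \eqref{pp} with a hierarchy of step functions. Using the divisibility of $\mu$ and $\nu$, pick $B\in\Sigma$ with $\mu(B)=1$ and pairwise disjoint $A_1,\ldots,A_n\in T$ with $\nu(A_i)=c_i$ and $\sum c_i\leq\alpha$. Substituting $F(x,y)=\ind_B(x)\sum_i t_i\ind_{A_i}(y)$ into \eqref{pp} and writing $\Phi:=\psi\circ\p$, the inequality collapses to
\[
\Phi\Bigl(\sum_i c_i t_i\Bigr)\leq\sum_i c_i\Phi(t_i).
\]
Taking $n=2$ with $c_1+c_2=1$ yields the Jensen convexity of $\Phi$; taking $n=1$ with $c_1=c\in[0,\alpha]$ yields $\Phi(ct)\leq c\Phi(t)$. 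Convexity with $\Phi(0)=0$ forces $\Phi(t)/t$ to be non-decreasing, hence $\Phi(ct)\geq c\Phi(t)$ for $c\geq 1$; since $\alpha>1$, iterating pins $\Phi(t)=kt$ throughout $\R_+$. If $k=0$ we are in case (i); if $k>0$ then $\p$ is injective, hence a bijection $\R_+\to[0,\beta|$, and $\psi=k\p^{-1}$ on $[0,\beta|$.

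Henceforth assume $k>0$. Repeating the previous test with $\mu(B)=s\in[0,1]$ in place of $1$ shows that $t\mapsto\p^{-1}(s\p(t))$ is linear; defining $\gamma(s):=\p^{-1}(s\p(1))$, this reads $\p(\gamma(s)t)=s\p(t)$ for $s\in[0,1]$ and $t\geq 0$, from which $\gamma$ is immediately seen to be injective and multiplicative. Next I would test \eqref{pp} against the ``diagonal cross'' $F(x,y)=\sum_i u_i\ind_{B_i}(x)\ind_{A_i}(y)$ to obtain $\p^{-1}(\p(a)+\p(b))\leq a+b$ for all admissible $a,b\geq 0$, and against the product $F(x,y)=\bigl(u_1\ind_{B_1}+u_2\ind_{B_2}\bigr)(x)\,\ind_A(y)$, together with the $c\leftrightarrow 1/c$ reversal for $c\in(1,\alpha]$, to obtain homogeneity of degree $1$ of the $\p$-mean $(a,b)\mapsto\p^{-1}(s_1\p(a)+s_2\p(b))$. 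Jointly these force the monotonicity of $\p$ and the superadditivity $\p(a+b)\geq\p(a)+\p(b)$. A monotone multiplicative map $\gamma:[0,1]\to[0,1]$ with $\gamma(1)=1$ is necessarily a power function, $\gamma(s)=s^{1/p}$ for some $p>0$; the relation $\p(\gamma(s)t)=s\p(t)$ then yields $\p(t)=\p(1)t^p$, superadditivity demands $p\geq 1$, and $\psi(t)=\psi(1)t^{1/p}$ follows from $\psi\circ\p=k\cdot\mathrm{id}$. The principal obstacle is this last step: \emph{a priori} the multiplicative functional equation for $\gamma$ admits non-measurable pathological solutions, and excluding them requires a careful interplay between the subadditivity-type estimate $\p^{-1}(\p(a)+\p(b))\leq a+b$ and the homogeneity of the $\p$-mean.
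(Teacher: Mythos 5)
Your opening moves essentially reproduce the paper's: testing \eqref{pp} on $F=t\ind_{B\times A}$ and exploiting the window $[1,\alpha]$ of available $\nu$-measures to upgrade $\psi(s\p(ct))\leq c\psi(s\p(t))$ to the identity $\psi(s\p(t))=M(s)t$ is exactly how the paper begins, and your conclusions up to that point --- either case (i), or $\p$ is a bijection onto $[0,\beta|$ with $\psi=k\p^{-1}$ there and $\gamma(s)=\p^{-1}(s\p(1))$ multiplicative --- are sound (modulo the divisibility of $\mu$ and $\nu$, which the paper also uses tacitly). The genuine gap is the one you name yourself in the last sentence, and it cannot be deferred: nothing in your toolkit excludes a non-measurable multiplicative $\gamma$. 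The two estimates you propose for this purpose do not suffice. The homogeneity of $(a,b)\mapsto\p^{-1}(s_1\p(a)+s_2\p(b))$ in the dilation parameter is, for dilations lying in the range of $\gamma$, already a formal consequence of $\p(\gamma(s)t)=s\p(t)$, and in any case homogeneity of a two-variable mean does not yield monotonicity of its generator. The diagonal-cross estimate $\p^{-1}(\p(a)+\p(b))\leq a+b$ cannot be converted into the superadditivity $\p(a)+\p(b)\leq\p(a+b)$ --- which is what would give monotonicity and hence the power form --- unless you already know that $\p$ is increasing; that is a circle. So the assertion ``jointly these force the monotonicity of $\p$'' is precisely the missing step, not a consequence of what precedes it.

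The paper closes this gap with a genuinely two-dimensional test function: $F(x,y)=G(x)$ for $y\in B_1$ and $F(x,y)=H(x)$ for $y\in B_2$, where $G=t\ind_{A_1}+u\ind_{A_2}$ and $H=v\ind_{A_1}+w\ind_{A_2}$ take their values on the \emph{same} pair of $\mu$-sets of measure $1/2$. Together with the multiplicativity already established, this yields the full Mulholland inequality $\pp_\p(t+v,u+w)\leq\pp_\p(t,u)+\pp_\p(v,w)$ for $\pp_\p(t_1,t_2)=\p^{-1}(\p(t_1)+\p(t_2))$; your diagonal cross only produces the degenerate case $u=v=0$. The paper then invokes the Matkowski--\'Swi\k{a}tkowski theorem (Theorem \ref{ms}): a bijection $\p$ of $\R_+$ for which $\pp_\p$ is subadditive is automatically a \emph{convex homeomorphism}. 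That external result is the regularity engine that kills the pathological Cauchy solutions; convexity combined with multiplicativity then forces $\p(t)=\p(1)t^p$ with $p\geq1$, and $\psi(t)=\psi(1)t^{1/p}$ follows from $\psi\circ\p=k\cdot\mathrm{id}$. To complete your argument you would need either this two-variable test together with the cited theorem, or some other a priori regularity for $\gamma$ (monotonicity, or boundedness on a set of positive measure), which your one-dimensional probes do not supply.
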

\begin{proof}
For arbitrary $A_1\in\Sigma$ and $B_1\in T$, and any $t\in\R_+$, we have
\begin{equation*}
\psi\bigl(\mu(A_1)\p(\nu(B_1)t)\bigr)\leq\nu(B_1)\psi\bigl(\mu(A_1)\p(t)\bigr),
\end{equation*}
which follows from \eqref{pp} by taking $F=t\ind_{A_1\times B_1}$. Hence, $$\psi(a\p(bt))\leq b\psi(a\p(t))\quad\mbox{for }a\in [0,1], b\in [0,\alpha]\mbox{ and }t\in\R_+.$$Therefore, if $b,t>0$ then we have $$\frac{\psi(a\p(bt))}{bt}\leq\frac{\psi(a\p(t))}{t}\, ,$$whence substituting $v=bt$ (then $v/t=b\in [0,\alpha]$) gives
\begin{equation}\label{psiv}
\frac{\psi(a\p(v))}{v}\leq\frac{\psi(a\p(t))}{t}\quad\mbox{for }a\in [0,1]\mbox{ and }t,v>0\mbox{ such that }\frac{v}{t}\leq\alpha.
\end{equation}

For any $\delta>0$ and $a\in [0,1]$ define a~map $\Phi_{\delta,a}\colon [\delta,\alpha\delta]\to\R_+$ by $$\Phi_{\delta,a}(x)=\frac{\psi(a\p(x))}{x}\, .$$Since $x/y\leq\alpha$ for every $x$ and $y$ from $[\delta,\alpha\delta]$, inequality \eqref{psiv} implies that $\Phi_{\delta,a}$ is constant. Hence, there is $M(\delta,a)\in\R_+$ satisfying
\begin{equation}\label{psiM}
\psi(a\p(x))=M(\delta,a)x\quad\mbox{for }a\in [0,1],\,\delta>0\mbox{ and }x\in [\delta,\alpha\delta].
\end{equation}
Taking any sequence $(\delta_n)_{n=-\infty}^{\infty}$ of positive numbers with $$\delta_{-n}\xrightarrow[n\to\infty]{}0,\quad\,\delta_n\xrightarrow[n\to\infty]{}0\,\,\quad\mbox{and}\,\,\,\delta_n<\delta_{n+1}<\alpha\delta_n\,\mbox{ for }n\in\Z,$$we can see that the numbers $M(\delta,a)$ in equation \eqref{psiM} do not depend on $\delta$. Thus, there is a~function $M\colon [0,1]\to\R_+$ such that
\begin{equation}\label{pM}
\psi(a\p(x))=M(a)x\quad\mbox{for }a\in [0,1]\mbox{ and }x\in\R_+.
\end{equation}
If $M(a)=0$ for some $a\in (0,1]$, then \eqref{pM} would imply that $\psi$ vanishes on the interval $\p(\R_+)$, that is, assertion (i) holds true. So, for the rest of the proof we may assume that $M(a)\not=0$ for every $a\in (0,1]$.

Let $a,b\in [0,1]$. Composing the two functions: $x\mapsto\psi(a\p(x))$ and $x\mapsto\psi(b\p(x))$, and using \eqref{pM}, we obtain $$\psi(a\p(\psi(b\p(x))))=M(a)M(b)x\quad\mbox{for }x\in\R_+.$$Fixing for a~moment the variable $x$ and regarding the both sides of the above equation as functions of $a$ and $b$, we conclude by symmetry that
\begin{equation}\label{pab}
\psi(a\p(\psi(b\p(x))))=\psi(b\p(\psi(a\p(x)))).
\end{equation}
Now, observe that $\psi$ is a~one-to-one function on the set $\{a\p(x)\colon a\in [0,1], x\in\R_+\}$. For if $0\leq a\leq b\leq 1$ and $x,y\in\R_+$ satisfy $a\p(x)\not=b\p(y)$, then $a/b\leq 1$, so $a/b\cdot\p(x)\in\p(\R_+)$, say $\p(z)=a/b\cdot\p(x)$. Of course, $b\not=0$, so $M(b)\not=0$. Thus, $$\psi(a\p(x))=\psi(b\p(z))=M(b)z\not=M(b)y=\psi(b\p(y)).$$Consequently, equation \eqref{pab} implies that $a\p(\psi(b\p(x)))=b\p(\psi(a\p(x)))$, that is, $$\frac{\p\circ\psi(a\p(x))}{a}=\frac{\p\circ\psi(b\p(x))}{b}\quad\mbox{for }a,b\in (0,1]\mbox{ and }x\in\R_+.$$Hence, there is a~map $\gamma\colon\R_+\to\R_+$ satisfying $$\p\circ\psi(a\p(x))=a\gamma(x)\quad\mbox{for }a\in [0,1]\mbox{ and }x\in\R_+.$$

Now, for any $a,b\in [0,1]$ and $x\in\R_+$ we have $$M(a)M(b)x=\psi(b\p(\psi(a\p(x))))=\psi(ab\gamma(x)),$$thus there is a~map $N\colon [0,1]\to\R_+$ such that $\psi(c\gamma(x))=N(c)x$ for every $c\in [0,1]$, $x\in\R_+$ and, moreover, $$N(ab)=M(a)M(b)\quad\mbox{and}\quad  N(a)=M(1)M(a)\quad\mbox{for }a,b\in [0,1].$$Therefore, the function $m\colon (0,1]\to\R_+$ given by $m(x)=M(x)/M(1)$ is multiplicative ({\it i.e.} satisfies $m(xy)=m(x)m(y)$) and does not vanish, which implies that $$m(x)=(\exp\circ A\circ\log)(x)\quad\mbox{for }x\in (0,1],$$
where $A\colon\R\to\R$ is an additive function (\cite[Theorem 13.1.3]{kuczma}).

Using equation \eqref{pM} we get
\begin{equation}\label{paMx}
\psi(a\p(x))=M(a)x=M(1)m(a)x=M(1)(\exp\circ A\circ\log)(a)x
\end{equation}
for $a\in (0,1]$ and $x\in\R_+$. Putting here $x=1$ yields $$\psi(\p(1)a)=M(1)(\exp\circ A\circ\log)(a)\quad\mbox{for }a\in (0,1]$$(which, in particular, implies $\p(1)\not=0$), thus 
\begin{equation}\label{Zpsi}
\psi(z)=M(1)(\exp\circ A\circ\log)\Bigl(\frac{z}{\p(1)}\Bigr)\quad\mbox{for }z\in (0,\p(1)].
\end{equation}

Fix any $x>0$ and pick $a\in (0,1]$ such that $a\p(x)\leq\p(1)$. Applying \eqref{Zpsi} to $z=a\p(x)$ we obtain $$\psi(a\p(x))=M(1)(\exp\circ A\circ\log)\Bigl(a\frac{\p(x)}{\p(1)}\Bigr).$$On the other hand, we have formula \eqref{paMx}, and consequently, $$A\Bigl(\log a\frac{\p(x)}{\p(1)}\Bigr)=A(\log a)+\log x,$$that is $$A(\log\p(x))=A(\log\p(1))+\log x\quad\mbox{for }x\in (0,\infty).$$The last equation implies that $A$ is surjective and also injective on the interval $(-\infty,\log\beta|$ (as $x\in (0,\infty)$ the values $\p(x)$ runs through $(0,\beta|$). This implies that $A$ is injective on the whole real line (otherwise for some $h_1,\ldots ,h_k$ from a~Hamel basis of $\R$ over $\Q$, and some $\lambda_1,\ldots ,\lambda_k\in\Q$, not all equal to zero, we would have $\sum_j\lambda_jf(h_j)=0$, but then $f(k\sum_j\lambda_jh_j)=0$ for every $k\in\Q$, so $f$ would vanish at infinitely many points of $(-\infty,\log\beta|$). Applying the function $\exp\circ A^{-1}$ to the both sides of the equation above we get
$$
\p(x)=\p(1)\exp\bigl\{A^{-1}(\log x)\bigr\}\quad\mbox{for }x\in (0,\infty)
$$
and, coming back to \eqref{paMx}, we get
$$
\psi(x)=\frac{M(1)}{\p(1)}\exp\bigl\{A(\log x)\bigr\}\quad\mbox{for }x\in (0,\infty).
$$
Replacing, with no loss of generality, $\p$ by $\p/\p(1)$ and $\psi$ by $\psi/\psi(1)$ we may conclude that both $\p$ and $\psi$ are multiplicative on $(0,\infty)$ and $\psi=\p^{-1}$.

Now, pick any sets $A_1,A_2\in\Sigma$ with $A_1\cap A_2=\emptyset$ and $B_1,B_2\in T$ with $B_1\cap B_2=\emptyset$ such that $$b:=\mu(A_1)=\mu(A_2)>0\quad\mbox{and}\quad c:=\nu(B_1)=\nu(B_2)>0$$(we may take $b=c=1/2$). For arbitrary $t,u,v,w\in\R_+$ define a~map $F\in\Sa_+$ by
$$
F(x,y)=\left\{\begin{array}{cl}
G(x) & \mbox{if }y\in B_1,\\
H(x) & \mbox{if }y\in B_2,\\
0 & \mbox{if }y\in Y\setminus(B_1\cup B_2),
\end{array}\right.
$$
where $$G=t\ind_{A_1}+u\ind_{A_2}\quad\mbox{and}\quad H=v\ind_{A_1}+w\ind_{A_2}.$$Plugging this function into inequality \eqref{pp}, and using multiplicativity of $\p$, we obtain
\begin{equation*}
\begin{split}
\p^{-1}&\Biggl\{\int_X\p\Biggl(\int_YF(x,y)\,\nu(\da y)\Biggr)\mu(\da x)\Biggr\}=\p^{-1}\Biggl\{\int_X\p\bigl(cG(x)+cH(x)\bigr)\,\mu(\da x)\Biggr\}\\
&=c\p^{-1}\Biggl\{\int_X\p\bigl(G(x)+H(x)\bigr)\,\mu(\da x)\Biggr\}=c\p^{-1}\Biggl(\int_{A_1}+\int_{A_2}\Biggr)\\
&=c\p^{-1}\bigl(b\p(t+v)+b\p(u+w)\bigr)\leq\int_Y\p^{-1}\Biggl(\int_X\p\circ F(x,y)\,\mu(\da x)\Biggr)\nu(\da y)\\
&=\int_{B_1}\p^{-1}\Biggl(\int_X\p\bigl(G(x)\bigr)\,\mu(\da x)\Biggr)\nu(\da y)+\int_{B_2}\p^{-1}\Biggl(\int_X\p\bigl(H(x)\bigr)\,\mu(\da x)\Biggr)\nu(\da y)\\
&=c\p^{-1}\Biggl(\int_X\p\bigl(G(x)\bigr)\,\mu(\da x)\Biggr)+c\p^{-1}\Biggl(\int_X\p\bigl(H(x)\bigr)\,\mu(\da x)\Biggr)\\
&=c\p^{-1}\bigl(b\p(t)+b\p(u)\bigr)+c\p^{-1}\bigl(b\p(v)+b\p(w)\bigr).
\end{split}
\end{equation*}
Therefore, dividing by $\p^{-1}(b)$ and $c$, and defining a~function $\pp_\p\colon\R_+^2\to\R$ by the formula $$\pp_\p(\boldsymbol{t})=\p^{-1}\bigl(\p(t_1)+\p_2(t_2)\bigr)\quad\mbox{for }\boldsymbol{t}=(t_1,t_2)\in\R_+^2,$$we may write $$\pp_\p(t+v,u+w)\leq\pp_\p(t,u)+\pp_\p(v,w)\quad\mbox{for }t,u,v,w\in\R_+$$(the Mulholland inequality; see \cite{mulholland} and \cite[\S 8.8]{kuczma}). By appealing to Theorem \ref{ms} below, due to Matkowski and \'Swi\k{a}tkowski, we infer that $\p$ is a~convex homeomorphism, whence $\p(t)=t^p$ for some $p\geq 1$, and all $t\in\R_+$. Consequently, $\psi(t)=t^{1/p}$ for all $t\in\R_+$ and the proof is completed.
\end{proof}
\begin{theorem}[{\cite[Theorem 2]{matkowski_swiatkowski}}]\label{ms}
If $\p\colon\R_+\to\R_+$ is a bijection and the function $\pp_\p$, defined as above, is subadditive on $\R_+^2$, then $\p$ is a~convex homeomorphism of $\R_+$.
\end{theorem}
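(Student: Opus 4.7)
My strategy is to reduce the four-variable subadditivity of $\pp_\p$ to a pointwise Jensen-type inequality for $\p$ alone, and then upgrade this to convexity by standard regularity arguments. The homeomorphism conclusion is then a free consequence of the bijection hypothesis combined with convexity.

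\textbf{Step 1 (normalisation, monotonicity, continuity).} Plugging $t_2=v_1=0$ into the subadditivity $\pp_\p(t_1+v_1,t_2+v_2) \leq \pp_\p(t_1,t_2)+\pp_\p(v_1,v_2)$ gives $\pp_\p(t_1,v_2) \leq t_1+v_2$, which unwinds to the superadditivity of $\p$: $\p(s)+\p(t) \leq \p(s+t)$. The special case $s=t=0$ combined with the bijection hypothesis pins down $\p(0)=0$. Superadditivity together with surjectivity onto $\R_+$ then forces strict monotonicity, and monotonicity of a bijection between intervals delivers continuity.

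\textbf{Step 2 (Jensen convexity).} The symmetric substitution $(t_1,t_2,v_1,v_2)=(a,b,b,a)$ yields
$$\p^{-1}\bigl(2\p(a+b)\bigr) \leq 2\p^{-1}\bigl(\p(a)+\p(b)\bigr),$$
and setting $c := \p^{-1}(\p(a)+\p(b))$ recasts this as $\p(2c) \geq 2\p(a+b)$, while superadditivity independently furnishes $\p(2c) \geq 2\p(c) = 2\p(a)+2\p(b)$. By iterating the subadditivity along dyadic partitions of each coordinate and combining the resulting sandwich inequalities, one should ultimately extract the Jensen inequality $\p\bigl(\tfrac{s+t}{2}\bigr) \leq \tfrac{\p(s)+\p(t)}{2}$ on $\R_+$.

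\textbf{Step 3 (from Jensen convexity to the homeomorphism).} Continuity from Step~1 upgrades Jensen convexity to genuine convexity (Sierpi\'nski's theorem, or simply because a monotone Jensen-convex function is convex). A convex, strictly increasing surjection $\p\colon\R_+\to\R_+$ with $\p(0)=0$ is automatically a homeomorphism, completing the proof.

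\textbf{Main obstacle.} The technical core is Step~2: subadditivity is mediated by the nonlinear map $\p^{-1}$, so no single substitution extracts convexity cleanly. The genuine argument --- essentially running Mulholland's implication in reverse --- requires carefully chosen sequences of test points at geometric scales in order to separate $\p$ from $\p^{-1}$ and close the gap between the two sides of the squeeze identified in Step~2. It is precisely this combinatorial substitution scheme that makes the Matkowski--\'Swi\k{a}tkowski theorem nontrivial, rather than a routine consequence of the definitions.
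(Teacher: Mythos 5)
First, note that the paper offers no proof of this statement at all: Theorem~\ref{ms} is quoted verbatim from Matkowski--\'Swi\k{a}tkowski \cite[Theorem 2]{matkowski_swiatkowski} and used as a black box, so your argument has to stand entirely on its own --- and it does not. The decisive gap is your Step~2, and you essentially concede it yourself: the two inequalities you extract, $\p(2c)\geq 2\p(a+b)$ and $\p(2c)\geq 2\p(a)+2\p(b)$ with $c=\p^{-1}(\p(a)+\p(b))$, are both \emph{lower} bounds on the same quantity $\p(2c)$, so there is no ``sandwich'' to close; they cannot be combined into $\p\bigl(\tfrac{s+t}{2}\bigr)\leq\tfrac{\p(s)+\p(t)}{2}$. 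The appeal to ``iterating along dyadic partitions'' is not an argument but a placeholder for the entire content of the theorem: superadditivity plus continuity of $\p$ emphatically does not imply convexity, so some genuinely new input from the two-dimensional subadditivity of $\pp_\p$ is required, and you never produce it.

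Step~1 also has concrete defects. The substitution $t_1=t_2=v_1=v_2=0$ yields only $0\leq\p^{-1}(2\p(0))$, which is vacuous and does not ``pin down'' $\p(0)=0$; in the paper's setting this is a standing hypothesis, but your claimed derivation of it is wrong. More seriously, passing from $\p^{-1}\bigl(\p(s)+\p(t)\bigr)\leq s+t$ to the superadditivity $\p(s)+\p(t)\leq\p(s+t)$ requires applying $\p$ to both sides monotonically, i.e.\ it presupposes exactly the monotonicity you then claim to \emph{deduce} from superadditivity --- a circularity. The clean consequence of your substitution is that $\p^{-1}$ is a subadditive bijection of $\R_+$; but the statement that a subadditive bijection of $\R_+$ onto itself is automatically an increasing homeomorphism is itself a nontrivial theorem (it is the subject of the Matkowski--\'Swi\k{a}tkowski paper \cite{matkowski_swiatkowski (jmaa)} cited in the bibliography), not a one-line consequence of surjectivity. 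So both the regularity step and the convexity step need real proofs that the proposal does not supply.
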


We may invert inequality \eqref{pp} and, with obvious changes in the proof of Theorem \ref{M1}, derive the multiplicativity of $\p$ and $\psi$, and the relation $\psi=\p^{-1}$. Next, we may conclude that the map $\pp_\p$ is superadditive, hence, by an analogue of Theorem \ref{ms} (\cite[Theorem 3]{matkowski_swiatkowski}), the function $\p$ is a~concave homeomorphism. Therefore, we obtain the following counterpart of Theorem \ref{M1}:
\begin{theorem}\label{M1_invert}
Let $(X,\Sigma,\mu)$, $(Y,T,\nu)$, $\p$ and $\psi$ be as in Theorem \ref{M1}, but instead of \eqref{pp} assume the reversed inequality. Then, and only then, we have either:
\begin{itemize*}
\item[{\rm (i)}] $\psi|_{\p(\R_+)}=0$, or
\item[{\rm (ii)}$^\prime$] $\p(1)\not=0\not=\psi(1)$ and there exists $p\in (0,1)$ such that $\p(t)=\p(1)t^p$ and $\psi(t)=\psi(1)=t^{1/p}$ for $t\in\R_+$.
\end{itemize*}
\end{theorem}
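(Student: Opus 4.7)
The plan is to track the proof of Theorem \ref{M1} step by step, reversing each inequality and checking that every deduction either goes through unchanged or with the correspondingly mirrored conclusion. The key structural observation is that although the proof of Theorem \ref{M1} starts from a one-sided inequality, it reduces almost at once to the two-sided identity $\psi(a\p(x)) = M(a)x$; from there on, the argument is purely algebraic and does not consult the direction of the original inequality.

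Concretely, I would first test the reversed version of \eqref{pp} with $F = t\ind_{A_1 \times B_1}$ to obtain $\psi(a\p(bt)) \geq b\psi(a\p(t))$ for $a \in [0,1]$, $b \in [0,\alpha]$ and $t \in \R_+$, and then rewrite it as $\psi(a\p(v))/v \geq \psi(a\p(t))/t$ whenever $v/t \leq \alpha$. For any $x, y \in [\delta, \alpha\delta]$ both ratios $x/y$ and $y/x$ lie in $[1/\alpha, \alpha]$, so the reversed inequality still applies in both directions and forces the map $\Phi_{\delta, a}$ to be constant; hence $\psi(a\p(x)) = M(a)x$ holds identically, as in \eqref{pM}. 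This sandwich is the one place where the direction of \eqref{pp} genuinely enters, and I expect it to be the main point needing verification.

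From there the proof carries over verbatim: the dichotomy yielding either assertion (i) or $M(a) \neq 0$ for all $a \in (0, 1]$, the multiplicativity of $m = M/M(1)$, the exponential form $m = \exp \circ A \circ \log$ with $A$ additive, the conclusion that $\p$ and $\psi$ are multiplicative on $(0, \infty)$ with $\psi = \p^{-1}$, and the Hamel-basis argument promoting injectivity of $A$ from $(-\infty, \log\beta|$ to all of $\R$, are all independent of the sign of \eqref{pp}.

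Finally, I would plug the same step function built from $G = t\ind_{A_1} + u\ind_{A_2}$ and $H = v\ind_{A_1} + w\ind_{A_2}$ into the reversed inequality. The same chain of simplifications, based on multiplicativity of $\p$, yields
$$\pp_\p(t+v, u+w) \geq \pp_\p(t, u) + \pp_\p(v, w) \quad \mbox{for } t, u, v, w \in \R_+,$$
so that $\pp_\p$ is now superadditive on $\R_+^2$. Invoking \cite[Theorem 3]{matkowski_swiatkowski}, the concave counterpart of Theorem \ref{ms}, $\p$ is a concave homeomorphism of $\R_+$; combined with multiplicativity, this forces $\p(t) = \p(1)t^p$ for some $p \in (0, 1)$, and consequently $\psi(t) = \psi(1)t^{1/p}$.
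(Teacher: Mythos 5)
Your proposal is correct and follows essentially the same route as the paper, which itself only remarks that one should invert inequality \eqref{pp} and make ``obvious changes'' in the proof of Theorem \ref{M1} to get multiplicativity and $\psi=\p^{-1}$, then deduce superadditivity of $\pp_\p$ and apply the concave analogue of Theorem \ref{ms}. You correctly identify the one place where the direction of the inequality genuinely matters (the sandwich forcing $\Phi_{\delta,a}$ to be constant) and verify it survives the reversal, which is exactly the content the paper leaves implicit.
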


We may now proceed to our main result.
\begin{theorem}\label{M2}
Suppose that $\mu(\Sigma)=[0,1]$ and $\p,\psi\colon\R_+\to\R_+$ are bijections satisfying $\p(0)=\psi(0)=0$, inequality \eqref{A} and condition {\rm (}$\ast${\rm )}. Then there exist numbers $p,q>1$ with $p^{-1}+q^{-1}=1$ such that $\p(t)=\p(1)t^p$ and $\psi(t)=\psi(1)t^q$ for $t\in\R_+$.
\end{theorem}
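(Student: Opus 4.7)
The plan is to reduce Theorem~\ref{M2} to Theorem~\ref{M1} by deploying the optimality condition~$(\ast)$ to bootstrap H\"older's inequality~\eqref{A} on $X$ into the generalised Minkowski inequality~\eqref{pp} on a product space, and then to invoke Theorem~\ref{M1} twice (once for $\p$, once for $\psi$) via the two symmetric halves of~$(\ast)$.

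First, I fix an auxiliary measure space $(Y,T,\nu)$ with $[0,\alpha]\subset\nu(T)$ for some $\alpha>1$; for instance $Y=[0,2]$ with Lebesgue measure. For any $F\in\Sa_+(X\times Y)$ of the form
$$F(x,y)=\sum_{k=1}^n G_k(x)\ind_{B_k}(y),\quad G_k\in\Sa_+(X),\ B_k\in T\mbox{ pairwise disjoint}$$
(the class sufficient to drive the proof of Theorem~\ref{M1}), the marginal $g(x):=\int_Y F(x,y)\,\nu(\da y)$ belongs to $\Sa_+(X)$. If $g=0$ then $F=0$ and the target inequality is trivial; otherwise the first half of~$(\ast)$ supplies $\chi\colon\R_+\to\R_+$ with $\chi(0)=0$, $\chi\circ g\not=0$, and
$$\int_X g\cdot(\chi\circ g)\,\da\mu=\Pa_\p(g)\,\Pa_\psi(\chi\circ g).$$
By Fubini the left-hand side equals $\int_Y\bigl(\int_X F(x,y)\chi(g(x))\,\mu(\da x)\bigr)\nu(\da y)$, and applying~\eqref{A} on $X$ for each fixed $y$ bounds this above by $\Pa_\psi(\chi\circ g)\int_Y\Pa_\p(F(\cdot,y))\,\nu(\da y)$. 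Dividing by the strictly positive quantity $\Pa_\psi(\chi\circ g)$ yields
$$\Pa_\p(g)\leq\int_Y\Pa_\p(F(\cdot,y))\,\nu(\da y),$$
which is precisely~\eqref{pp} for the pair $(\p,\p^{-1})$. This is the crux of the argument: $(\ast)$ allows one to read a Minkowski-type inequality out of H\"older by testing against the optimiser $\chi\circ g$.

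Since $\p$ is a bijection, $\p^{-1}$ does not vanish on $\p(\R_+)$, so alternative~(i) of Theorem~\ref{M1} is excluded and alternative~(ii) forces $\p(t)=\p(1)t^p$ for some $p\geq 1$. Next, running the same argument with the roles of $f,g$ and $\p,\psi$ interchanged and invoking the second half of~$(\ast)$ (for each nonzero $g$ the optimiser is $f=\tau\circ g$) produces the analogue of~\eqref{pp} for the pair $(\psi,\psi^{-1})$; a second application of Theorem~\ref{M1} then yields $\psi(t)=\psi(1)t^q$ for some $q\geq 1$.

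Finally, substituting these power forms back into~\eqref{A} simplifies it to $\int fg\,\da\mu\leq(\int f^p\,\da\mu)^{1/p}(\int g^q\,\da\mu)^{1/q}$. Taking $f=\ind_A$ for $A\in\Sigma$ with $\mu(A)=a\in(0,1)$ (which exists since $\mu(\Sigma)=[0,1]$) and $g=\chi\circ f$ from~$(\ast)$, the forced equality collapses to $a=a^{1/p+1/q}$, so $1/p+1/q=1$ and hence $p,q>1$. The main obstacle lies in the first displayed estimate: one must arrange the Fubini computation so that the extraneous factor $\Pa_\psi(\chi\circ g)$ cancels cleanly; everything downstream is bookkeeping around Theorem~\ref{M1}.
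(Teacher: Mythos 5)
Your proof is correct, and its core --- using the optimiser $\chi\circ g$ supplied by $(\ast)$ together with Fubini to convert H\"older's inequality \eqref{A} into the generalised Minkowski inequality \eqref{M21} for the pair $(\p,\p^{-1})$, then invoking Theorem \ref{M1} twice by symmetry --- is exactly the paper's argument (the paper phrases the same computation via $\Phi(t)=t\chi(t)$ and the ratio $(\Phi\circ G)/G$, but it is the identical estimate). Where you genuinely diverge is the final step establishing $p^{-1}+q^{-1}=1$. The paper first applies Theorem \ref{matkowski2} to reduce \eqref{A} to concavity of $(s,t)\mapsto s^{1/p}t^{1/q}$, computes the Hessian to get $p^{-1}+q^{-1}\leq 1$, and then rules out strict inequality by perturbing $p$ to some $p'<p$ and observing that $\Pa_{\w\p}<\Pa_\p$ on non-constant functions would make \eqref{A} strict for every non-zero $g$, contradicting $(\ast)$. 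You instead test $(\ast)$ directly on $f=\ind_A$ with $\mu(A)=a\in(0,1)$: since $\chi\circ f=\chi(1)\ind_A$ with $\chi(1)>0$, the forced equality reads $\chi(1)a=a^{1/p}\cdot\chi(1)a^{1/q}$, whence $a=a^{1/p+1/q}$ and $p^{-1}+q^{-1}=1$ (and then $p,q>1$ since neither exponent can equal $1$). Your route is shorter and more elementary --- it bypasses Theorem \ref{matkowski2}, the G\^ateaux differential computation, and the monotonicity of $p\mapsto\Pa_{\p}$ from \cite{hardy} entirely --- at no loss of rigour. One further small point in your favour: you restrict the verification of \eqref{M21} to functions $F(x,y)=\sum_k G_k(x)\ind_{B_k}(y)$, which is precisely the class used in the proof of Theorem \ref{M1} and for which the marginal $\int_YF(\cdot,y)\,\nu(\da y)$ is genuinely a step function on $X$; this is slightly more careful than the paper's formulation for arbitrary $F\in\Sa_+(X\times Y)$.
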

\begin{proof}
Take any measure space $(Y,T,\nu)$ such that $[0,\alpha]\subset\nu(T)$ for some $\alpha>1$. We wish to apply Theorem \ref{M1} to the given measure space $(X,\Sigma,\mu)$ and to $(Y,T,\nu)$. To this end, we shall show that
\begin{equation}\label{M21}
\p^{-1}\Biggl\{\int_X\p\Biggl(\int_YF(x,y)\,\nu(\da y)\Biggr)\mu(\da x)\Biggr\}\leq\int_Y\p^{-1}\Biggl(\int_X\p\circ F(x,y)\,\mu(\da x)\Biggr)\nu(\da y)
\end{equation}
for every $F\in\Sa_+(X\times Y)$. So, fix any such function $F$ and define $G\in\Sa_+(X)$ by $$G(x)=\int_YF(x,y)\,\nu(\da y)\quad\mbox{for }x\in X.$$Let $\chi\colon\R_+\to\R_+$ be a~function with $\chi(0)=0$ and such that inequality \eqref{A} becomes equality for $(f,g)=(G,\chi\circ G)$. Define also $\Phi\colon\R_+\to\R_+$ by $\Phi(t)=t\chi(t)$ for $t\in\R_+$. Of course, we may assume that $G\not=0$, since otherwise \eqref{M21} is trivial. For simplicity, we will treat the ratio $(\Phi\circ G)(x)/G(x)$ as zero for all these $x\in X$ for which $G(x)=0$. Noticing that $(\Phi\circ G)/G=\chi\circ G$ and using inequality \eqref{A}, we get:
\begin{equation*}
\begin{split}
\int_X&\Phi\Biggl(\int_YF(x,y)\,\nu(\da y)\Biggr)\mu(\da x)=\int_X\frac{(\Phi\circ G)(x)}{G(x)}\cdot G(x)\,\mu(\da x)\\
&=\int_X\frac{(\Phi\circ G)(x)}{G(x)}\int_YF(x,y)\,\nu(\da y)\mu(\da x)=\int_Y\int_X\frac{(\Phi\circ G)(x)}{G(x)}\cdot F(x,y)\,\mu(\da x)\nu(\da y)\\
&\leq\int_Y\Pa_\p\bigl(F(\cdot,y)\bigr)\Pa_\psi\Bigl(\frac{\Phi\circ G}{G}\Bigr)\,\nu(\da y)=\Pa_\psi\Bigl(\frac{\Phi\circ G}{G}\Bigr)\int_Y\Pa_\p\bigl(F(\cdot,y)\bigr)\,\nu(\da y)\\
&=\Pa_\psi(\chi\circ G)\int_Y\p^{-1}\Biggl(\int_X\p\circ F(x,y)\,\mu(\da x)\Biggr)\nu(\da y).
\end{split}
\end{equation*}
We may divide both sides by $\Pa_\psi(\chi\circ G)$ as $\chi\circ G\not=0$. By doing so, we obtain nothing else but inequality \eqref{M21} because by the choice of $\chi$, we have
$$\int_X(\Phi\circ G)(x)\,\mu(\da x)=\int_XG(x)\cdot(\chi\circ G)(x)\,\mu(\da x)=\Pa_\p(G)\Pa_\psi(\chi\circ G).$$

By virtue of Theorem \ref{M1}, there is $p\geq 1$ such that $\p(t)=\p(1)t^p$ for $t\in\R_+$. By symmetry, there is also $q\geq 1$ such that $\psi(t)=\psi(1)t^q$ for $t\in\R_+$. What is left to be proved is the equality $p^{-1}+q^{-1}=1$.

According to Theorem \ref{matkowski2}, the pair of functions $\p(t)=\p(1)t^p$ and $\psi(t)=\psi(1)t^q$ satisfies inequality \eqref{A} if and only if the function $F\colon\R_+^2\to\R_+$, defined by $F(s,t)=s^{1/p}t^{1/q}$ is concave. This is in turn equivalent to the second G\^ateaux differential $\da^2F(\boldsymbol{a})(\boldsymbol{v},\boldsymbol{v})$ being non-positive for every $\boldsymbol{a}\in (0,\infty)^2$ and $\boldsymbol{v}\in\R^2$. An easy calculation shows that for every $\boldsymbol{a}\in (0,\infty)^2$ we have
\renewcommand{\arraystretch}{2.1}
$$\da^2F(\boldsymbol{a})=\left(\begin{array}{cc}\displaystyle{\frac{1}{p}\Bigl(\frac{1}{p}-1\Bigr)} & \displaystyle{\frac{1}{pq}}\\ \displaystyle{\frac{1}{pq}} & \displaystyle{\frac{1}{q}\Bigl(\frac{1}{q}-1\Bigr)}\end{array}\right)\! ,$$

\vspace{1mm}\noindent
whence the concavity of $F$ is equivalent to $$0\leq\mathrm{det}(\da^2F(a))=\frac{1}{pq}\Bigl(1-\Bigl(\frac{1}{p}+\frac{1}{q}\Bigr)\Bigr)\,\,\Longleftrightarrow\,\,\frac{1}{p}+\frac{1}{q}\leq 1.$$

Now, suppose we have the strict inequality $p^{-1}+q^{-1}<1$ and pick any $p^\prime<p$ such that ${p^\prime}^{-1}+q^{-1}<1$. Let $\w\p(t)=\p(1)t^{p^\prime}$. Then for any non-constant function $f\in\Sa_+$ we would have $\Pa_{\w\p}(f)<\Pa_\p(f)$ (see, {\it e.g.}, \cite[\S 3.11]{hardy}), so inequality \eqref{A} holds true after replacing $\p$ by $\w{\p}$. However, this would imply that the original inequality is strict for any non-constant map $f\in\Sa_+$ and any non-zero map $g\in\Sa_+$, which contradicts condition ($\ast$).
\end{proof}

Now, we wish to derive a~counterpart of Theorem \ref{M2} for reversed H\"older's inequality (see, {\it e.g.}, \cite[Theorem 13.6]{hewitt}):
\begin{thm}
Let $0<p<1$ and $q$ satisfy $p^{-1}+q^{-1}=1$ {\rm (}note that $q<0${\rm )} and $\p\colon\R_+\to\R_+$, $\psi\colon (0,\infty)\to (0,\infty)$ be given as $\p(t)=t^p$ and $\psi(t)=t^q$. Then
\begin{equation}\label{R_Holder}
\int_Xfg\,\da\mu\geq\Pa_\p(f)\Pa_\psi(g)
\end{equation}
for all non-negative functions $f\in L_p(\mu)$ and $g\in L_q(\mu)$, unless $\int_Xg^q\,\da\mu=0$.
\end{thm}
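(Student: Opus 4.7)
The plan is to reduce inequality \eqref{R_Holder} to the classical H\"older inequality (for exponents strictly greater than $1$) by a single algebraic substitution. The relevant arithmetic is that since $0<p<1$ and $p^{-1}+q^{-1}=1$, we have $q=p/(p-1)<0$; moreover the H\"older conjugate of $\alpha:=1/p>1$ is $\beta:=1/(1-p)$, and a direct check gives $p\alpha=1$ and $p\beta=p/(1-p)=-q$. Thus the exponents $p$, $q$ appearing on the right-hand side of \eqref{R_Holder} are precisely what is produced by applying classical H\"older with exponents $\alpha$ and $\beta$ to suitable powers of $f$ and $g$, and the only thing to do is to spot the correct substitution.

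Before invoking H\"older I would dispose of the trivial cases. If $\int_X f^p\,\da\mu=0$, then $f=0$ $\mu$-almost everywhere and \eqref{R_Holder} reads $0\geq 0$; if $\int_X g^q\,\da\mu=+\infty$, then $\Pa_\psi(g)=0$ (since $1/q<0$) and again there is nothing to show; and the degenerate case $\int_X g^q\,\da\mu=0$ is the one explicitly excluded in the statement. So I may assume both $\int_X f^p\,\da\mu$ and $\int_X g^q\,\da\mu$ are finite and strictly positive. Because $q<0$, finiteness of $\int_X g^q\,\da\mu$ forces $g>0$ $\mu$-almost everywhere, which is what will later justify dividing by positive powers of $g$.

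The main step is then to apply classical H\"older's inequality with exponents $\alpha$ and $\beta$ to the non-negative measurable functions
\[
F:=(fg)^p,\qquad G:=g^{-p}.
\]
Using $p\alpha=1$ and $p\beta=-q$ one immediately obtains $FG=f^p$, $F^\alpha=fg$ and $G^\beta=g^{q}$, so H\"older delivers
\[
\int_X f^p\,\da\mu\;\leq\;\Bigl(\int_X fg\,\da\mu\Bigr)^{p}\Bigl(\int_X g^q\,\da\mu\Bigr)^{1-p}.
\]
Raising both sides to the power $1/p$, rewriting $(1-p)/p=-1/q$, and multiplying through by the positive quantity $\bigl(\int_X g^q\,\da\mu\bigr)^{1/q}$, produces precisely $\Pa_\p(f)\Pa_\psi(g)\leq\int_X fg\,\da\mu$, which is \eqref{R_Holder}.

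I do not anticipate any genuine obstacle: the whole argument is an algebraic repackaging of the classical H\"older inequality, made possible by the fortunate relation $p\beta=-q$ between the exponents. The only mildly delicate point is the preliminary bookkeeping: the case analysis on whether the integrals $\int f^p\,\da\mu$ and $\int g^q\,\da\mu$ are zero, finite, or infinite, together with the use of $g>0$ a.e.\ to make the auxiliary function $G=g^{-p}$ well defined. Once this is in place, the substitution above is an immediate one-line reduction, and the equality case (proportionality of $f^p$ and $g^q$) is inherited from the equality case of classical H\"older applied to $F$ and $G$.
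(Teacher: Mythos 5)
Your argument is correct: the substitution $F=(fg)^p$, $G=g^{-p}$ with conjugate exponents $\alpha=1/p$, $\beta=1/(1-p)$ does reduce \eqref{R_Holder} to the classical H\"older inequality, the exponent arithmetic ($p\alpha=1$, $p\beta=-q$, $(1-p)/p=-1/q$) checks out, and your preliminary case analysis (in particular, that finiteness of $\int_Xg^q\,\da\mu$ forces $g>0$ a.e.) covers the degenerate situations. There is, however, nothing in the paper to compare it against: the authors state this theorem as a known classical fact and simply cite Hewitt--Stromberg, Theorem 13.6, without giving a proof. Your derivation is the standard one found in such references, so it fills in exactly what the paper delegates to the literature.
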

\noindent
Note that inequality \eqref{R_Holder} is claimed only for $\mu$-almost everywhere positive $g\in L_q(\mu)$, and that the function $\psi$ is defined only on $(0,\infty)$. When considering step functions $f,g$ these restrictions may be disregarded, provided we define $\psi(0)=0$ and replace $f$ by $f\cdot\ind_{\mathrm{supp}(g)}$ (where $\mathrm{supp}(g)=\{x\in X\colon g(x)\not=0\}$) in inequality \eqref{R_Holder}. Anyway, aiming for a~converse theorem to the reversed H\"older inequality, we shall slightly modify our assumptions in comparison to these of Theorem \ref{M2}. Note also that inequality \eqref{R_Holder} becomes equality if and only if the functions $g^{-1}$ and $f^pg^{-q}$ are proportional, so condition ($\ast$), after adapting to this new situation, again seems natural:
$$\begin{array}{l}
\mbox{For every non-zero function }f\in\Sa_+\mbox{ there exists a~function }\chi\colon\R_+\to\R_+\\
\mbox{satisfying }\chi(0)=0\mbox{ and }\chi(t)>0\mbox{ for }t>0\mbox{ and such that inequality \eqref{R_Holder}}\\
\mbox{becomes equality for }g=\chi\circ f.\mbox{ Conversely, for every non-zero }g\in\Sa_+\\
\mbox{there is a function }\tau\colon\R_+\to\R_+\mbox{ satisfying }\tau(0)=0\mbox{ and }\tau(t)>0\mbox{ for}\\
t>0\mbox{ and such that inequality \eqref{R_Holder} becomes equality for }f=\tau\circ g.
\end{array}
\leqno(**)
$$
\begin{theorem}\label{M3}
Let $\mu(\Sigma)=[0,1]$ and $\p,\psi\colon\R_+\to\R_+$ be bijections with $\p(0)=\psi(0)=0$. Suppose that
\begin{equation}\label{RH}
\int_Xfg\,\da\mu\geq\Pa_\p(f\cdot\ind_{\mathrm{supp}(g)})\Pa_\psi(g)\quad\mbox{for }f,g\in\Sa_+
\end{equation}
and the condition {\rm (}$\ast\ast${\rm )} holds true. Then there exist numbers $p\in (0,1)$ and $q<0$ with $p^{-1}+q^{-1}=1$ such that $\p(t)=\p(1)t^p$ and $\psi(t)=\psi(1)t^q$ for $t\in\R_+$.
\end{theorem}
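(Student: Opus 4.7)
The idea is to mirror the proof of Theorem~\ref{M2}, with \eqref{RH} in place of \eqref{A} and Theorem~\ref{M1_invert} in place of Theorem~\ref{M1}. The extra complication is that \eqref{RH} is not symmetric in $\p$ and $\psi$ because of the indicator $\ind_{\mathrm{supp}(g)}$ on the $f$-side, so the ``by symmetry'' step that gave the form of $\psi$ in Theorem~\ref{M2} is unavailable; instead, $\psi$ will be pinned down by a duality argument once $\p$ is known.

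I would first pick an auxiliary measure space $(Y,T,\nu)$ with $[0,\alpha]\subset\nu(T)$ for some $\alpha>1$ and, running the argument of Theorem~\ref{M2} with all inequalities reversed, show that for every $F\in\Sa_+(X\times Y)$,
\begin{equation*}
\p^{-1}\Biggl\{\int_X\p\Biggl(\int_YF(x,y)\,\nu(\da y)\Biggr)\mu(\da x)\Biggr\}\geq\int_Y\p^{-1}\Biggl(\int_X\p\circ F(x,y)\,\mu(\da x)\Biggr)\nu(\da y).
\end{equation*}
Setting $G(x):=\int_YF(x,y)\,\nu(\da y)$ (which we may assume nonzero), the first half of $(\ast\ast)$ produces $\chi$ with $\chi(0)=0$, $\chi(t)>0$ for $t>0$, saturating \eqref{RH} at $(G,\chi\circ G)$; since $\mathrm{supp}(\chi\circ G)=\mathrm{supp}(G)$ this collapses to $\int_XG(\chi\circ G)\,\da\mu=\Pa_\p(G)\Pa_\psi(\chi\circ G)$. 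Fubini plus \eqref{RH} applied to each inner integral $\int_X(\chi\circ G)F(\cdot,y)\,\da\mu$, together with the observation that $F(\cdot,y)=0$ off $\mathrm{supp}(G)$ for $\nu$-almost every $y$ (so the indicator drops out of $\Pa_\p$), yields the displayed inequality after dividing by $\Pa_\psi(\chi\circ G)>0$. Theorem~\ref{M1_invert} applied with its ``$\psi$'' taken to be $\p^{-1}$ then rules out case~(i) (as $\p^{-1}$ does not vanish on $\p(\R_+)$) and produces $p\in(0,1)$ with $\p(t)=\p(1)t^p$.

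Set $q:=p/(p-1)$, so that $q<0$ and $p^{-1}+q^{-1}=1$, and establish $\Pa_\psi(g)=\|g\|_q$ for every nonzero $g\in\Sa_+$. Since $\Pa_\p=\|\cdot\|_p$, inequality \eqref{RH} reads $\int_Xfg\,\da\mu\geq\|f\cdot\ind_{\mathrm{supp}(g)}\|_p\Pa_\psi(g)$; the test function $f=g^{q-1}\ind_{\mathrm{supp}(g)}$, which is a legitimate element of $\Sa_+$, gives $\Pa_\psi(g)\leq\|g\|_q$ after a direct calculation using $(q-1)p=q$ and $1-1/p=1/q$. For the reverse inequality, the second half of $(\ast\ast)$ provides $f_g=\tau_g\circ g$ saturating \eqref{RH}, so $\Pa_\psi(g)=\int_Xf_gg\,\da\mu/\|f_g\|_p\geq\|g\|_q$ by the classical reversed H\"older inequality on $L_p$ and $L_q$, which is available now that $\p$ is a power.

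Finally, to read off $\psi$ from $\Pa_\psi=\|\cdot\|_q$, I would test on one-atom step functions $g=c\ind_A$ with $c>0$ and $\mu(A)\in(0,1]$, obtaining the functional relation $\psi(c\mu(A)^{1/q})=\psi(c)\mu(A)$. Setting $c=1$ gives $\psi(s)=\psi(1)s^q$ on $s=\mu(A)^{1/q}\in[1,\infty)$, and for any $t>0$ the assumption $\mu(\Sigma)=[0,1]$ allows a choice of $\mu(A)\in(0,1]$ with $\mu(A)\leq t^{-q}$, making $t\mu(A)^{1/q}\geq 1$; substituting back extends the formula to all of $(0,\infty)$, and together with $\psi(0)=0$ this is the desired form of $\psi$. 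The main obstacle I anticipate is the indicator bookkeeping in the first step --- namely showing via the Fubini argument that $\Pa_\p(F(\cdot,y)\cdot\ind_{\mathrm{supp}(G)})=\Pa_\p(F(\cdot,y))$ for $\nu$-a.e.~$y$ --- together with the loss of the tidy symmetry argument of Theorem~\ref{M2}, which is what forces the duality detour in the third paragraph.
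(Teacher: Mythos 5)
Your proposal is correct, and while its first half coincides with the paper's, the second half takes a genuinely different route. The opening step is the paper's own: re-run the Theorem \ref{M2} computation with \eqref{RH} in place of \eqref{A}, dispose of the indicator exactly as you do (since $(\chi\circ G)(x)=0$ forces $G(x)=0$, hence $F(x,y)=0$), and apply Theorem \ref{M1_invert} with its $\psi$ equal to $\p^{-1}$ to get $\p(t)=\p(1)t^p$, $p\in(0,1)$. For $\psi$, the paper proceeds by first proving continuity of $\psi$ on $(0,\infty)$ via convexity of $(s,t)\mapsto\p^{-1}(s)\psi^{-1}(t)$ (a convex version of Theorem \ref{matkowski2}), then establishing $\Pa_\psi=\Pa_\gamma$ with $\gamma(t)=t^q$, and finally testing on $g=t\ind_A+u\ind_{X\setminus A}$ with $\mu(A)=1/2$ to get a Jensen equation for $t\mapsto\psi(t^{1/q})$, solved using the continuity. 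You reach the same identity $\Pa_\psi(g)=\|g\|_q$ with the identical lower bound from ($\ast\ast$), but your upper bound via the explicit test function $f=g^{q-1}\ind_{\mathrm{supp}(g)}$ is a self-contained computation (note $f^p=g^q$ is the genuine equality case of the reversed H\"older inequality; the exponent $(-1+q)/p$ used at the analogous spot in the paper does not actually yield equality there, so your choice is the one that makes the chain of inequalities close). You then bypass the continuity-plus-Jensen machinery entirely by exploiting the full range $\mu(\Sigma)=[0,1]$ on one-atom functions $c\ind_A$, turning $\Pa_\psi=\|\cdot\|_q$ into the exact relation $\psi(c\mu(A)^{1/q})=\mu(A)\psi(c)$ and reading off $\psi(t)=\psi(1)t^q$ algebraically, first on $[1,\infty)$ and then everywhere. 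This buys a more elementary argument with no regularity step; the price is the support bookkeeping you already flag, namely that $\|g\|_q$ must be read as $\bigl(\int_{\mathrm{supp}(g)}g^q\,\da\mu\bigr)^{1/q}$ and the classical reversed H\"older inequality in the lower bound must be applied on $\mathrm{supp}(g)$, which is legitimate because $\tau(t)>0$ for $t>0$ gives $\mathrm{supp}(\tau\circ g)=\mathrm{supp}(g)$.
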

\begin{proof}
Take any measure space $(Y,T,\nu)$ such that $[0,\alpha]\subset\nu(T)$ for some $\alpha>1$. Observe that we may safely re-write the calculations from the proof of Theorem \ref{M2} using our assumption \eqref{RH} instead of \eqref{A}. By doing so, we apply inequality \eqref{RH} only for the pairs $(f,g)=\bigl(F(\cdot,y),\chi\circ G\bigr)$, where $y\in Y$. This is legitimate as for every such pair we have $f=f\cdot\ind_{\mathrm{supp}(g)}$ because $(\chi\circ G)(x)=0$ implies  $G(x)=0$, thus $F(x,y)=0$. Consequently, we obtain the inequality reverse to \eqref{M21}, for every $F\in\Sa_+(X\times Y)$. In view of Theorem \ref{M1_invert}, we conclude that for some $p\in (0,1)$ we have $\p(t)=\p(1)t^p$ ($t\in\R_+$).

Of course, we cannot repeat the argument above for the function $\psi$ (which is not supposed to be a~homeomorphism of $\R_+$), so let us proceed another way. First, we prove that $\psi$ is continuous on $(0,\infty)$. This will be done with the aid of a~`convex' version of Theorem \ref{matkowski2} (see \cite[Remark 6]{matkowski (studia)}), whose proof we repeat below for completeness.

Take any $\lambda\in(0,1)$ and pick any $A\in\Sigma$ with $\mu(A)=\lambda$ (then $\mu(X\setminus A)=1-\lambda$). For arbitrary $t,u,v,w>0$ let $$f=\p^{-1}(t)\ind_A+\p^{-1}(u)\ind_{X\setminus A}\quad\mbox{and}\quad g=\p^{-1}(v)\ind_A+\p^{-1}(w)\ind_{X\setminus A}.$$Putting these two functions into \eqref{RH} gives $$\lambda F(t,v)+(1-\lambda)F(u,w)\geq F\bigl(\lambda(t,v)+(1-\lambda)(u,w)\bigr),$$where $F\colon (0,\infty)^2\to (0,\infty)$ is defined by $F(s,t)=\p^{-1}(s)\psi^{-1}(t)$. Hence, $F$ is convex on $(0,\infty)^2$, thus it is also continuous (see, \cite[Theorem 7.1.1]{kuczma}). But we already know that $\p$ is continuous. Consequently, $\psi$ is continuous (on $(0,\infty)$) as well.

Now, let $q<0$ be the number satisfying $p^{-1}+q^{-1}=1$ and denote $\gamma(t)=t^q$ for $t>0$. Let $g$ be an arbitrary positive step function and let $\tau$ be a~function from condition ($\ast\ast$). Then, by H\"older's inequality \eqref{R_Holder}, we have $$\Pa_\psi(g)=\frac{\displaystyle{\int_X(\tau\circ g)\cdot g\,\da\mu}}{\displaystyle{\Pa_\p(\tau\circ g)}}\geq\Pa_\gamma(g).$$On the other hand, after substituting $f=g^{(-1+q)/p}$ we get equality in \eqref{R_Holder}, that is, $$\Pa_\p\bigl(g^{(-1+q)/p}\bigr)\Pa_\psi(g)\leq\int_Xg^{(-1+q)/p}\cdot g\,\da\mu=\Pa_\p\bigl(g^{(-1+q)/p}\bigr)\Pa_\gamma(g)\leq\Pa_\p\bigl(g^{(-1+q)/p}\bigr)\Pa_\psi(g).$$
Consequently, $\Pa_\psi(g)=\Pa_\gamma(g)$ for every positive step function $g$. In particular, putting $g=t\ind_{A}+u\ind_{X\setminus A}$, where $\mu(A)=1/2$ and $t,u>0$, we get $$\psi^{-1}\Bigl(\frac{\psi(t)+\psi(u)}{2}\Bigr)=\Bigl(\frac{a^q+b^q}{2}\Bigr)^{1/q}\quad\mbox{for }t,u>0.$$ Hence, the map $(0,\infty)\ni t\mapsto\psi(t^{1/q})$ satisfies Jensen's functional equation and, being continuous, is of the form $\psi(t^{1/q})=at+b$ (see \cite[\S 13.2]{kuczma}). Since $\psi$ maps $(0,\infty)$ onto itself, we conclude that $b=0$, so $\psi(t)=\psi(1)t^q$ and the proof is completed.
\end{proof}

\section{Remarks on the assumptions}
Let us explain that the assumptions upon the measure space $(Y,T,\nu)$ in Theorem \ref{M1} are essential, not only the inequality $\nu(Y)>1$, but also the requirement that there is a~non-atomic part having measure greater $1$, plays an important role.

First, consider the case where $Y=\{y_1,y_2\}$, $\nu\{y_1\}=\nu\{y_2\}=1$ and $(X,\Sigma,\mu)$ is an arbitrary measure space with $[0,1]\subset\mu(\Sigma)$. Then, inequality \eqref{pp} reads as $$\psi\Biggl\{\int_X\p\bigl(f(x)+g(x)\bigr)\mu(\da x)\Biggr\}\leq\psi\Biggl(\int_X\p\bigl(f(x)\bigr)\mu(\da x)\Biggr)+\psi\Biggl(\int_X\p\bigl(g(x)\bigr)\mu(\da x)\Biggr),$$ where $f(x)=F(x,y_1)$ and $g(x)=F(x,y_2)$. If $\psi=\p^{-1}$ then this is nothing else but Minkowski's inequality $\Pa_\p(f+g)\leq\Pa_\p(f)+\Pa_\p(g)$ (for all $f,g\in\Sa_+(X)$). However, according to Matkowski's result, \cite[Theorem 3]{matkowski (pams)}, such an inequality is equivalent to the fact that the function $\pp_\p$ is concave. Hence, in this case inequality \eqref{pp} obviously does not imply that $\p$ and $\psi$ are power functions.

Now, consider the case where $Y$ and $X$ are probabilistic spaces and let again $\psi=\p^{-1}$, where $\p\colon\R_+\to\R_+$ is an increasing bijection. Recall that the weighted quasi-arithmetic mean with generator $\p$ is given by $$\mathfrak{M}_\p(\boldsymbol{a})=\mathfrak{M}_\p(\boldsymbol{a},\boldsymbol{q})=\p^{-1}\Biggl\{\sum_{j=1}^nq_j\p(a_j)\Biggr\},$$for any finite sequence $\boldsymbol{a}=(a_1,\ldots ,a_n)$ of non-negative numbers, and any sequence $\boldsymbol{q}=(q_1,\ldots ,q_n)$ of weights, {\it i.e.}, non-negative numbers summing up to $1$. There is a~classical result (see \cite[Theorem 106(i)]{hardy}) which says that whenever 
\begin{itemize*}
\item[(i)] $\p$ is four times continuously differentiable on $(0,\infty)$ and
\item[(ii)] the functions: $\p$, $\p^\prime$ and $\p^{\prime\prime}$ are positive on $(0,\infty)$, 
\end{itemize*}
then the inequality
\begin{equation}\label{M_Hardy}
\mathfrak{M}_\p\Bigl(\frac{\boldsymbol{a}+\boldsymbol{b}}{2}\Bigr)\leq\frac{1}{2}\bigl(\mathfrak{M}_\p(\boldsymbol{a})+\mathfrak{M}_\p(\boldsymbol{b})\bigr)
\end{equation}
holds true for all non-negative sequences $\boldsymbol{a}$, $\boldsymbol{b}$, and all non-negative weights if and only if 
\begin{itemize*}
\item[(iii)] the function $\p^\prime/\p^{\prime\prime}$ is concave. 
\end{itemize*}
Suppose that $\p$ has all the properties (i)-(iii). By routine arguments, we may then conclude that inequality \eqref{M_Hardy} holds true in the integral form, that is, when we replace the arithmetic mean at the both sides of \eqref{M_Hardy} by integrating with respect to some probabilistic measure. Of course, we may do the same thing with the weighted mean in the definition of $\mathfrak{M}_\p$. Hence, inequality \eqref{M_Hardy} takes the form $$\p^{-1}\Biggl\{\int_X\p\Biggl(\int_YF(x,y)\,\nu(\da y)\Biggr)\mu(\da x)\Biggr\}\leq\int_Y\p^{-1}\Biggl(\int_X\p\circ F(x,y)\,\mu(\da x)\Biggr)\nu(\da y)$$(for all $F\in\Sa_+(X\times Y)$), which is nothing else but inequality \eqref{pp} with $\psi=\p^{-1}$. However, conditions (i)-(iii) obviously do not imply that $\p$ is a~power function.

The same remarks, with obvious changes, are valid for Theorem \ref{M1_invert}. 

Concerning the assumption $\mu(\Sigma)=[0,1]$ let us pose the following question: Under what weaker assumptions upon the measure space $(X,\Sigma,\mu)$ the assertions in Theorems \ref{M2} and \ref{M3} remain true?
\bibliographystyle{amsplain}

\end{document}